\documentclass[12pt, reqno]{amsart}
\usepackage{amsmath, amsthm, amscd, amsfonts, amssymb, graphicx, color}
\usepackage[bookmarksnumbered, colorlinks, plainpages]{hyperref}
\input{mathrsfs.sty}

\textheight 22.5truecm \textwidth 15truecm
\setlength{\oddsidemargin}{0.35in}\setlength{\evensidemargin}{0.35in}

\setlength{\topmargin}{-.5cm}

\newtheorem{theorem}{Theorem}[section]
\newtheorem{lemma}[theorem]{Lemma}

\theoremstyle{definition}

\theoremstyle{remark}
\newtheorem{remark}[theorem]{Remark}

\numberwithin{equation}{section}

\begin{document}

\title{Operator revision of a Ky Fan type inequality}

\author[J. Rooin, A. Alikhani]{J. Rooin$^1$, A. Alikhani$^1$ }

\address{$^{1}$Department of Mathematics, Institute for Advanced Studies in Basic Sciences (IASBS), Zanjan 45137-66731, Iran}

\email{rooin@iasbs.ac.ir\\ akram.alikhani88@gmail.com}

\subjclass[2010]{47A63.}

\keywords{Ky Fan type inequality;
positive operator; operator mean.}

\begin{abstract}
Let $\mathscr{H}$ be a complex Hilbert space and $A,B\in \mathbb{B}(\mathscr{H})$
such that $0<A,B\leq\frac{1}{2}I$. Setting $A':=I-A$ and $B':=I-B$, we prove
$$
A'\nabla_\lambda B'-A'!_\lambda B'
\leq A\nabla_\lambda B-A!_\lambda B,
$$
where $\nabla_\lambda$ and $!_\lambda$ denote
the weighted arithmetic and
harmonic operator means, respectively. This inequality is the natural extension of
a Ky Fan type inequality due to H. Alzer.
Some parallel and related results  are also obtained.
\end{abstract}
\maketitle

\section{Introduction}
Let $n\geq2$ and $\lambda_1,\cdots,\lambda_n\geq0$
such that $\sum_{i=1}^n\lambda_i=1$.
For $n$ arbitrary real numbers $x_1,\cdots,x_n>0$,
 we denote by $A_n, G_n$ and $H_n$ the arithmetic,
geometric and harmonic means of $x_1,\cdots,x_n$ respectively, i.e.
\begin{equation}\label{numer-1-mean}
A_n=\sum_{i=1}^n\lambda_ix_i\qquad\qquad G_n=\prod_{i=1}^n{x_i}^{\lambda_i}\qquad\qquad
H_n=\frac{1}{\sum_{i=1}^n\lambda_i\frac{1}{x_i}}.
\end{equation}
Also when $x_i\in(0,\frac{1}{2}]$ we denote by
 $A'_n, G'_n$ and $H'_n$ the arithmetic, geometric
and harmonic means of $x'_1:=1-x_1,\cdots,x'_n:=1-x_n$ respectively, i.e.
\begin{equation}\label{numer-2-mean}
A'_n=\sum_{i=1}^n\lambda_ix'_i\qquad\qquad G'_n=\prod_{i=1}^n{x'_i}^{\lambda_i}\qquad\qquad
H'_n=\frac{1}{\sum_{i=1}^n\lambda_i\frac{1}{x'_i}}.
\end{equation}
The Ky Fan's inequality
\begin{equation}\label{ky-fan-1}
\frac{A'_n}{G'_n}\leq\frac{A_n}{G_n}
\end{equation}
was introduced for the first time in \cite{B&B}.
From then several mathematicians attained new proofs,
extensions, refinements and various related results.
For more information about Ky Fan and Ky Fan type inequalities
 see \cite{Alzer-T,MO}.
In 1988, an additive analogue of (\ref{ky-fan-1})
presented by H. Alzer \cite{Alzer-U} as
\begin{equation}\label{ky-fan-2}
A'_n-G'_n\leq A_n-G_n.
\end{equation}
In both of (\ref{ky-fan-1}) and (\ref{ky-fan-2}),
equality holds if and only if $x_1=\cdots=x_n$.\\
Another interesting additive analogue of Ky Fan's
inequality was discovered by H. Alzer
\cite{Alzer-A} in 1993, as follows
\begin{equation}\label{ky-fan-3}
A'_n-H'_n\leq A_n-H_n
\end{equation}
with equality holding if and only if $x_1=\cdots=x_n$. For a new refinement and
converse of (\ref{ky-fan-3}) refer to \cite{JR}.
If we divide two sides of (\ref{ky-fan-3}) by $A'_nH'_n$, and using this fact that
$A'_nH'_n\geq A_nH_n$, we conclude the following inequality due to J. Sandor \cite{sandor}
\begin{equation}\label{ky-fan-4}
\frac{1}{H'_n}-\frac{1}{A'_n}\leq\frac{1}{H_n}-\frac{1}{A_n}.
\end{equation}
Similarly, dividing two sides of (\ref{ky-fan-3}) by $H'_n$ and note that
$H'_n\geq H_n$,
we obtain
\begin{equation}\label{ky-fan-5}
\frac{A'_n}{H'_n}\leq \frac{A_n}{H_n}.
\end{equation}
Throughout this paper, let $\mathbb{B}(\mathscr{H})$ denote the
$C^*-$algebra of all bounded linear operators acting on a complex
Hilbert space $(\mathscr{H} ,\langle \cdot,\cdot\rangle)$ and $I$ be
the identity operator.  An operator
$A\in\mathbb{B}(\mathscr{H})$ is called positive if $\langle
Ax,x\rangle\geq0$ holds for every $x\in\mathscr{H}$ and then we
write $A\geq0$. If $A$ is positive and invertible, we write $A>0$.
For self-adjoint operators $A, B$, we say $A\leq B$ if $B-A\geq0$.
If $A\geq0$ and $X\in \mathbb{B}(\mathscr{H})$, then $X^*AX\geq0$.
We define the weighted arithmetic mean $\nabla_\lambda$,
the weighted geometric mean (the $\lambda$-power mean) $\sharp_\lambda$
 and the weighted harmonic mean $!_\lambda$
for $A,B>0$ and $0\leq\lambda\leq1$ by
\begin{eqnarray*}
&& A\nabla_\lambda B:=(1-\lambda)A+\lambda B\\
&& A\sharp_\lambda B:= A^{\frac{1}{2}}\big(A^{-\frac{1}{2}}
B^{\frac{1}{2}}A^{-\frac{1}{2}}\big)^{\lambda}A^{\frac{1}{2}}\\
&& A!_\lambda B:=\Big((1-\lambda)A^{-1}+\lambda B^{-1}\Big)^{-1}.
\end{eqnarray*}
In particular, in the case of $\lambda=\frac{1}{2}$, the usual arithmetic,
geometric and harmonic means of $A, B$ simply denoted by
$A\nabla B$, $A\sharp B$ and $A!B$, respectively.
For more information
on operator inequalities and operator means see \cite{FMPS}.\\
In \cite{MO-MI} an operator revision of (\ref{ky-fan-2}) for $n=2$ in the
case of commutative operators are obtained.
The aim of this paper is to generalize the inequalities (\ref{ky-fan-3}),
(\ref{ky-fan-4}) and (\ref{ky-fan-5})
for operators on Hilbert spaces.
In this way, we obtain natural direct operator version
for the inequality (\ref{ky-fan-3}).
Also we give operator extensions of
(\ref{ky-fan-4}) and (\ref{ky-fan-5}).
\section{Main results}
We start with the following useful lemma.
\begin{lemma}
Let $T\in \mathbb{B}(\mathscr{H})$ be a strictly positive operator
and $\lambda\in[0,1]$. Then
\begin{align}
\label{lem-f}&(i)\quad I\nabla_\lambda T-I!_\lambda T=\lambda(1-\lambda)(I-T)(T\nabla_\lambda I)^{-1}(I-T).\\
\label{lem-f-s}&(ii)\quad T^{\frac{1}{2}}\big(I\nabla_\lambda T^{-1}-I!_\lambda T^{-1}\big)T^{\frac{1}{2}}
=\lambda(1-\lambda)(T-I)(I\nabla_\lambda T)^{-1}(T-I).\\
\label{lem-f-ex}&(iii)\quad(I!_\lambda T)^{-\frac{1}{2}}(I\nabla_\lambda T)(I!_\lambda T)^{-\frac{1}{2}}-I=
\lambda(1-\lambda)(I-T)T^{-1}(I-T)
\end{align}
\end{lemma}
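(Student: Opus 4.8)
The plan is to use the single decisive observation that $T$, $I$, and every expression appearing in (i)--(iii) lie in the commutative $C^*$-subalgebra of $\mathbb{B}(\mathscr{H})$ generated by $T$; once this is noted, each identity reduces either to a one-variable algebraic identity (via the continuous functional calculus) or to a short string of operator manipulations in which all factors commute. First I would prove (i) directly. Writing $I\nabla_\lambda T=(1-\lambda)I+\lambda T$ and
\[
I!_\lambda T=\big((1-\lambda)I+\lambda T^{-1}\big)^{-1}=T\big((1-\lambda)T+\lambda I\big)^{-1}=T(T\nabla_\lambda I)^{-1},
\]
one gets
\[
I\nabla_\lambda T-I!_\lambda T=\Big(\big((1-\lambda)I+\lambda T\big)(T\nabla_\lambda I)-T\Big)(T\nabla_\lambda I)^{-1};
\]
expanding the bracket and using $(1-\lambda)^2+\lambda^2-1=-2\lambda(1-\lambda)$ collapses it to $\lambda(1-\lambda)(I-T)^2$, which (again by commutativity) splits as $\lambda(1-\lambda)(I-T)(T\nabla_\lambda I)^{-1}(I-T)$, i.e.\ (i). Equivalently, by functional calculus the claim is just the scalar identity $(1-\lambda)+\lambda t-\frac{t}{(1-\lambda)t+\lambda}=\frac{\lambda(1-\lambda)(1-t)^2}{(1-\lambda)t+\lambda}$ for $t>0$.

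Next I would deduce (ii) from (i). Replacing $T$ by $T^{-1}$ in (i) yields $I\nabla_\lambda T^{-1}-I!_\lambda T^{-1}=\lambda(1-\lambda)(I-T^{-1})(T^{-1}\nabla_\lambda I)^{-1}(I-T^{-1})$. Then I conjugate by $T^{1/2}$ and use the commuting-factor identities $T^{1/2}(I-T^{-1})=(I-T^{-1})T^{1/2}=T^{1/2}-T^{-1/2}$, $(T^{1/2}-T^{-1/2})T^{1/2}=T-I=T^{1/2}(T^{1/2}-T^{-1/2})$, and $T^{-1}\nabla_\lambda I=T^{-1/2}(I\nabla_\lambda T)T^{-1/2}$, whence $(T^{-1}\nabla_\lambda I)^{-1}=T^{1/2}(I\nabla_\lambda T)^{-1}T^{1/2}$. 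Inserting these, the $T^{\pm1/2}$ factors telescope and the right-hand side becomes $\lambda(1-\lambda)(T-I)(I\nabla_\lambda T)^{-1}(T-I)$, which is (ii).

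Finally, (iii) also follows from (i): write
\[
(I!_\lambda T)^{-\frac12}(I\nabla_\lambda T)(I!_\lambda T)^{-\frac12}-I=(I!_\lambda T)^{-\frac12}\big((I\nabla_\lambda T)-(I!_\lambda T)\big)(I!_\lambda T)^{-\frac12},
\]
and substitute (i) for the middle factor. Since $(I!_\lambda T)^{-1}=(1-\lambda)I+\lambda T^{-1}=T^{-1/2}(T\nabla_\lambda I)T^{-1/2}$, for commuting positive operators one has $(I!_\lambda T)^{-1/2}=T^{-1/2}(T\nabla_\lambda I)^{1/2}$; plugging this in, the three powers of $T\nabla_\lambda I$ (namely $\tfrac12,-1,\tfrac12$) cancel by commutativity, leaving $\lambda(1-\lambda)T^{-1/2}(I-T)^2T^{-1/2}=\lambda(1-\lambda)(I-T)T^{-1}(I-T)$, which is (iii).

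The computations are entirely routine; the only point that requires care is the bookkeeping with square roots and inverses of the various commuting positive operators, in particular justifying $(I!_\lambda T)^{-1/2}=T^{-1/2}(T\nabla_\lambda I)^{1/2}$ and the cancellations it triggers. For that reason I would either phrase the whole argument explicitly inside the commutative $C^*$-algebra generated by $T$, or carry it out through the continuous functional calculus, reducing each of (i)--(iii) to an elementary identity in the real variable $t>0$. No genuine obstacle is anticipated.
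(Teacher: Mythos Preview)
Your proposal is correct and follows essentially the same route as the paper: a direct commutative computation for (i), then (ii) by replacing $T$ with $T^{-1}$ and conjugating by $T^{1/2}$, and (iii) by conjugating (i) by $(I!_\lambda T)^{-1/2}$ and using $(I!_\lambda T)^{-1}(T\nabla_\lambda I)^{-1}=T^{-1}$. Your explicit framing via the commutative $C^{*}$-subalgebra generated by $T$ (equivalently, the functional calculus) is a clean way to justify the square-root and commutation steps, but the underlying argument is the same as the paper's.
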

\begin{proof}
(i) Since the operators $I$ and $T$ commute, we have
\begin{eqnarray*}
I\nabla_\lambda T-I!_\lambda T
&=&(1-\lambda)I+\lambda T-\big((1-\lambda)I+\lambda T^{-1}\big)^{-1}\\
&=&(1-\lambda)I+\lambda T-\big((1-\lambda)T+\lambda I\big)^{-1}T\\
&=&\big((1-\lambda)T+\lambda I\big)^{-1}\Big[\big((1-\lambda)T+\lambda I\big)\big((1-\lambda)I+\lambda T\big)-T\Big]\\
&=&\lambda(1-\lambda)\big((1-\lambda)T+\lambda I\big)^{-1}(I-2T+T^2)\\
&=&\lambda(1-\lambda)(I-T)\big((1-\lambda)T+\lambda I\big)^{-1}(I-T)\\
&=&\lambda(1-\lambda)(I-T)(T\nabla_\lambda I)^{-1}(I-T).
\end{eqnarray*}
\noindent (ii) Changing $T$ by $T^{-1}$ in (\ref{lem-f}), we have
\begin{eqnarray}
\nonumber I\nabla_\lambda T^{-1}-I!_\lambda T^{-1}&=&
\lambda(1-\lambda)(I-T^{-1})(T^{-1}\nabla_\lambda I)^{-1}(I-T^{-1})\\
\label{p-l-(ii)}&=&\lambda(1-\lambda)(T-I)T^{-1}(I\nabla_\lambda T)^{-1}(T-I).
\end{eqnarray}
Now multiplying both sides of (\ref{p-l-(ii)}) from left and right by $T^{\frac{1}{2}}$, we deduce
(\ref{lem-f-s}). \\
(iii) First, we multiply both sides of (\ref{lem-f}) from left and right by
$(I!_\lambda T)^{-\frac{1}{2}}$. Since
$$
(I!_\lambda T)^{-1}(T\nabla_\lambda I)^{-1}=\big((1-\lambda)I+\lambda T^{-1}\big)
\big((1-\lambda)T+\lambda I\big)^{-1}=T^{-1},
$$
so we obtain (iii).
\end{proof}
\begin{theorem}
Let $A,B$ be two strictly positive operators
and $\lambda\in[0,1]$. Then
{\small\begin{align}
\label{th-ex-lem-f}&(i)~~~A\nabla_\lambda B-A!_\lambda B=\lambda(1-\lambda)(A-B)(B\nabla_\lambda A)^{-1}(A-B).\\
\label{th-ex-lem-f-s}&(ii)~~~(A\sharp B)\Big[(A!_\lambda B)^{-1}-(A\nabla_\lambda B)^{-1}\Big]
(A\sharp B)=\lambda(1-\lambda)(B-A)(A\nabla_\lambda B)^{-1}(B-A).\\
\nonumber &(iii)~~~A\Big(A^{-1}\sharp(A!_\lambda B)^{-1}\Big)(A\nabla_\lambda B)
\Big(A^{-1}\sharp(A!_\lambda B)^{-1}\Big)A-A\\
\label{th-ex-lem-f-ex}&\qquad=\lambda(1-\lambda)(A-B)B^{-1}(A-B).
\end{align}
}
\end{theorem}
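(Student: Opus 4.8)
The plan is to deduce all three identities from the corresponding parts of the Lemma by one substitution followed by a congruence. Fix $A,B>0$ and set $T:=A^{-\frac12}BA^{-\frac12}$, which is strictly positive, so the Lemma applies to it, and note $T^{-1}=A^{\frac12}B^{-1}A^{\frac12}$. Writing out the definitions one checks at once the ``transformer'' relations
$$
A^{\frac12}(I\nabla_\lambda T)A^{\frac12}=A\nabla_\lambda B,\qquad A^{\frac12}(I!_\lambda T)A^{\frac12}=A!_\lambda B,\qquad A^{\frac12}T^{\frac12}A^{\frac12}=A\sharp B,
$$
together with the elementary identities $A^{\frac12}(I-T)A^{\frac12}=A-B$, $A^{-\frac12}T^{-1}A^{-\frac12}=B^{-1}$, $A^{-\frac12}(T\nabla_\lambda I)^{-1}A^{-\frac12}=(B\nabla_\lambda A)^{-1}$, $A^{-\frac12}(I\nabla_\lambda T^{-1})A^{-\frac12}=(A!_\lambda B)^{-1}$ and $A^{-\frac12}(I!_\lambda T^{-1})A^{-\frac12}=(A\nabla_\lambda B)^{-1}$. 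All of these follow by inserting $A^{\pm\frac12}A^{\mp\frac12}=I$ in the appropriate places and using $T^{-1}=A^{\frac12}B^{-1}A^{\frac12}$.

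For $(i)$ I would multiply \eqref{lem-f} on both sides by $A^{\frac12}$ and insert $I=A^{-\frac12}A^{\frac12}$ between consecutive factors on the right so as to create the blocks $A^{\frac12}(I-T)A^{\frac12}$ and $A^{-\frac12}(T\nabla_\lambda I)^{-1}A^{-\frac12}$; the left side becomes $A\nabla_\lambda B-A!_\lambda B$ and the right side becomes $\lambda(1-\lambda)(A-B)(B\nabla_\lambda A)^{-1}(A-B)$, which is \eqref{th-ex-lem-f}.

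For $(ii)$ I would do the same with \eqref{lem-f-s}: conjugation by $A^{\frac12}$ turns the outer factors into $A^{\frac12}T^{\frac12}=(A\sharp B)A^{-\frac12}$ and $T^{\frac12}A^{\frac12}=A^{-\frac12}(A\sharp B)$, so the left side becomes $(A\sharp B)\big[A^{-\frac12}(I\nabla_\lambda T^{-1}-I!_\lambda T^{-1})A^{-\frac12}\big](A\sharp B)$, which by the relations above equals $(A\sharp B)[(A!_\lambda B)^{-1}-(A\nabla_\lambda B)^{-1}](A\sharp B)$; the right side, handled by the same block-insertion, becomes $\lambda(1-\lambda)(B-A)(A\nabla_\lambda B)^{-1}(B-A)$, giving \eqref{th-ex-lem-f-s}.

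Part $(iii)$ is the delicate one, and I expect it to be the main obstacle, because it involves operator square roots that do not obviously respect conjugation. The key auxiliary fact is
$$
A^{-1}\sharp(A!_\lambda B)^{-1}=A^{-\frac12}(I!_\lambda T)^{-\frac12}A^{-\frac12},\qquad\text{equivalently}\qquad (I!_\lambda T)^{-\frac12}=A^{\frac12}\big(A^{-1}\sharp(A!_\lambda B)^{-1}\big)A^{\frac12},
$$
which follows by applying the definition of $\sharp$ to the pair $A^{-1},(A!_\lambda B)^{-1}$ and observing that the middle factor there, $A^{\frac12}(A!_\lambda B)^{-1}A^{\frac12}$, equals $(I!_\lambda T)^{-1}$ (this is just the second transformer relation of the first paragraph, inverted). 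Granting this, conjugating \eqref{lem-f-ex} by $A^{\frac12}$, using $A^{\frac12}I A^{\frac12}=A$ and $A^{-\frac12}T^{-1}A^{-\frac12}=B^{-1}$, and collecting terms yields \eqref{th-ex-lem-f-ex}. (Part $(iii)$ can alternatively be read off from $(i)$ after the replacement $A\mapsto A^{-1}$, $B\mapsto B^{-1}$ followed by conjugation by $A$, since $(A!_\lambda B)^{-1}=A^{-1}\nabla_\lambda B^{-1}$ and $(A\nabla_\lambda B)^{-1}=A^{-1}!_\lambda B^{-1}$; the congruence argument above has the advantage of treating all three parts uniformly.)
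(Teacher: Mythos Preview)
Your proof is correct and follows essentially the same route as the paper: set $T=A^{-\frac12}BA^{-\frac12}$ in each part of the Lemma and conjugate by $A^{\frac12}$, inserting $A^{\pm\frac12}A^{\mp\frac12}=I$ to identify the resulting factors with $A\nabla_\lambda B$, $A!_\lambda B$, $A\sharp B$, etc. Your explicit recognition of the identity $A^{-1}\sharp(A!_\lambda B)^{-1}=A^{-\frac12}(I!_\lambda T)^{-\frac12}A^{-\frac12}$ is exactly what the paper uses implicitly in the final step of part~(iii), and your side remark on deriving (iii) from (i) via $A\mapsto A^{-1}$, $B\mapsto B^{-1}$ is a nice observation not in the paper.
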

\begin{proof}
(i) Considering strictly positive operator $T=A^{-\frac{1}{2}}BA^{-\frac{1}{2}}$ and putting it
in (\ref{lem-f}) we obtain
{\small\begin{align}
\nonumber&\Big((1-\lambda)I+\lambda A^{-\frac{1}{2}}BA^{-\frac{1}{2}}\Big)
-\Big((1-\lambda)I+\lambda A^{\frac{1}{2}}B^{-1}A^{\frac{1}{2}}\Big)^{-1}\\
\label{eq-1} &=\lambda(1-\lambda)(I-A^{-\frac{1}{2}}BA^{-\frac{1}{2}})
\Big((1-\lambda)A^{-\frac{1}{2}}BA^{-\frac{1}{2}}+\lambda I\Big)^{-1}
(I-A^{-\frac{1}{2}}BA^{-\frac{1}{2}}).
\end{align}
}
Multiplying both sides of (\ref{eq-1}) from  left and right by
$A^{\frac{1}{2}}$, yields
\begin{align*}
\nonumber &\big((1-\lambda)A+\lambda B\big)-\big((1-\lambda)A^{-1}+\lambda B^{-1}\big)^{-1}\\
&=\lambda(1-\lambda)(A^{\frac{1}{2}}-BA^{-\frac{1}{2}})
\Big((1-\lambda)A^{-\frac{1}{2}}BA^{-\frac{1}{2}}+\lambda I\Big)^{-1}
(A^{\frac{1}{2}}-A^{-\frac{1}{2}}B)\\
&=\lambda(1-\lambda)(A^{\frac{1}{2}}-BA^{-\frac{1}{2}})A^{\frac{1}{2}}A^{-\frac{1}{2}}
\Big((1-\lambda)A^{-\frac{1}{2}}BA^{-\frac{1}{2}}+\lambda I\Big)^{-1}
A^{-\frac{1}{2}}A^{\frac{1}{2}}(A^{\frac{1}{2}}-A^{-\frac{1}{2}}B)\\
&=\lambda(1-\lambda)(A-B)
A^{-\frac{1}{2}}\Big((1-\lambda)A^{-\frac{1}{2}}BA^{-\frac{1}{2}}+\lambda I\Big)^{-1}
A^{-\frac{1}{2}}
(A-B)\\
&=\lambda(1-\lambda)(A-B)\big((1-\lambda)B+\lambda A\big)^{-1}(A-B).
\end{align*}
So the proof of (i) is complete.\\
(ii) Similar to (i), putting
$T=A^{-\frac{1}{2}}BA^{-\frac{1}{2}}$ in (\ref{lem-f-s}),
we have
{\small\begin{align*}
&(A^{-\frac{1}{2}}BA^{-\frac{1}{2}})^{\frac{1}{2}}\Big[(1-\lambda)I+
\lambda A^{\frac{1}{2}}B^{-1}A^{\frac{1}{2}}-\big((1-\lambda)I+
\lambda A^{-\frac{1}{2}}BA^{-\frac{1}{2}} \big)^{-1}\Big]
(A^{-\frac{1}{2}}BA^{-\frac{1}{2}})^{\frac{1}{2}}\\
&=\lambda(1-\lambda)(A^{-\frac{1}{2}}BA^{-\frac{1}{2}}-I)
\big((1-\lambda)I+\lambda A^{-\frac{1}{2}}BA^{-\frac{1}{2}}\big)^{-1}
(A^{-\frac{1}{2}}BA^{-\frac{1}{2}}-I)
\end{align*}
}
or
{\small\begin{align}
\nonumber &(A^{-\frac{1}{2}}BA^{-\frac{1}{2}})^{\frac{1}{2}}A^{\frac{1}{2}}
\Big[\big((1-\lambda)A^{-1}+\lambda B^{-1}\big)-
\big((1-\lambda)A+\lambda B\big)^{-1}\Big]A^{\frac{1}{2}}
(A^{-\frac{1}{2}}BA^{-\frac{1}{2}})^{\frac{1}{2}}\\
\label{th-(ii)-p-1}&=\lambda(1-\lambda)A^{-\frac{1}{2}}(B-A)\big((1-\lambda)A+\lambda B\big)^{-1}(B-A)A^{-\frac{1}{2}}.
\end{align}
}
Multiplying both sides of (\ref{th-(ii)-p-1}) from  left and right by
$A^{\frac{1}{2}}$ we deduce (ii).\\
(iii) Putting $T=A^{-\frac{1}{2}}BA^{-\frac{1}{2}}$ in (\ref{lem-f-ex})
we get
{\small\begin{align*}
&\big((1-\lambda)I+\lambda A^{\frac{1}{2}}B^{-1}A^{\frac{1}{2}}\big)^{\frac{1}{2}}
\big((1-\lambda)I+\lambda A^{-\frac{1}{2}}BA^{-\frac{1}{2}})
\big((1-\lambda)I+\lambda A^{\frac{1}{2}}B^{-1}A^{\frac{1}{2}}\big)^{\frac{1}{2}}
-I\\
&=\lambda(1-\lambda)(I-A^{-\frac{1}{2}}BA^{-\frac{1}{2}})A^{\frac{1}{2}}B^{-1}A^{\frac{1}{2}}
(I-A^{-\frac{1}{2}}BA^{-\frac{1}{2}}).
\end{align*}
}
Therefore
{\small\begin{align*}
&\Big(A^{\frac{1}{2}}\big((1-\lambda)A^{-1}+\lambda B^{-1}\big)A^{\frac{1}{2}}\Big)^{\frac{1}{2}}
A^{-\frac{1}{2}}\big((1-\lambda)A+\lambda B\big)A^{-\frac{1}{2}}
\Big(A^{\frac{1}{2}}\big((1-\lambda)A^{-1}+\lambda B^{-1}\big)A^{\frac{1}{2}}\Big)^{\frac{1}{2}}
\\
&-I=\lambda(1-\lambda)A^{-\frac{1}{2}}(A-B)B^{-1}(A-B)A^{-\frac{1}{2}}.
\end{align*}
}
This is equivalent to
{\small\begin{align}
\nonumber&\Big(A^{\frac{1}{2}}\big(A!_\lambda B\big)^{-1}A^{\frac{1}{2}}\Big)^{\frac{1}{2}}
A^{-\frac{1}{2}}(A\nabla_\lambda B)A^{-\frac{1}{2}}
\Big(A^{\frac{1}{2}}\big(A!_\lambda B\big)^{-1}A^{\frac{1}{2}}\Big)^{\frac{1}{2}}-I\\
\label{th-(iii)-p-1}&=\lambda(1-\lambda)A^{-\frac{1}{2}}(A-B)B^{-1}(A-B)A^{-\frac{1}{2}}.
\end{align}
}
Multiplying both sides of (\ref{th-(iii)-p-1}) from  left and right by
$A^{\frac{1}{2}}$ we deduce (iii).
\end{proof}
The next theorem gives a natural operator version
of the Ky Fan type inequality (\ref{ky-fan-3}) in the case of $n=2$.
Operator extensions of (\ref{ky-fan-4}) and (\ref{ky-fan-5}), are also
presented. For the sake of brevity, we set $A':=I-A$ and $B':=I-B$.
\begin{theorem}
Let $A,B\in \mathbb{B}(\mathscr{H})$
such that $0<A,B\leq\frac{1}{2}I$ and $\lambda\in[0,1]$. Then
we have
{\small\begin{align}
\label{operator-ky-fan3}&(i)~~~A'\nabla_\lambda B'-A'!_\lambda B'
\leq A\nabla_\lambda B-A!_\lambda B.\\
\label{operator-ky-fan4}&(ii)~~~
(A'\sharp B')\Big((A'!_\lambda B')^{-1}-(A'\nabla_\lambda B')^{-1}\Big)
(A'\sharp B')\leq
(A\sharp B)\Big((A!_\lambda B)^{-1}-(A\nabla_\lambda B)^{-1}\Big)
(A\sharp B).\\
\nonumber&(iii)~~~
A'\Big(A'^{-1}\sharp(A'!_\lambda B')^{-1}\Big)(A'\nabla_\lambda B')
\Big(A'^{-1}\sharp(A'!_\lambda B')^{-1}\Big)A'-A'\\
\label{operator-ky-fan5}&\qquad\leq
A\Big(A^{-1}\sharp(A!_\lambda B)^{-1}\Big)(A\nabla_\lambda B)
\Big(A^{-1}\sharp(A!_\lambda B)^{-1}\Big)A-A.
\end{align}
}
\end{theorem}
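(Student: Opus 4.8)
The plan is to deduce all three inequalities from the three identities established in the preceding theorem, applied once to the pair $(A,B)$ and once to the complemented pair $(A',B')$, and then to compare the two resulting right-hand sides. Beyond those identities the only tools needed are the antitonicity of operator inversion (if $0<S\leq T$ then $T^{-1}\leq S^{-1}$), the congruence-monotonicity $X\leq Y\Rightarrow Z^{*}XZ\leq Z^{*}YZ$, and the nonnegativity of the scalar $\lambda(1-\lambda)$.

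First I would record the structural facts forced by the hypothesis $0<A,B\leq\frac{1}{2}I$. Then $A'=I-A$ and $B'=I-B$ satisfy $\frac{1}{2}I\leq A',B'<I$; in particular $A'$ and $B'$ are strictly positive, so the preceding theorem applies to $(A',B')$. A one-line computation gives $A'-B'=-(A-B)$ and $B'-A'=A-B$, whence $(A'-B')X(A'-B')=(B'-A')X(B'-A')=(A-B)X(A-B)$ for every self-adjoint $X$. Likewise $A'\nabla_\lambda B'=I-(A\nabla_\lambda B)$ and $B'\nabla_\lambda A'=I-(B\nabla_\lambda A)$. Since $0<A\nabla_\lambda B\leq\frac{1}{2}I$ and $0<B\nabla_\lambda A\leq\frac{1}{2}I$, this yields $0<A\nabla_\lambda B\leq\frac{1}{2}I\leq A'\nabla_\lambda B'$ and $0<B\nabla_\lambda A\leq\frac{1}{2}I\leq B'\nabla_\lambda A'$, hence by antitonicity of inversion $(A'\nabla_\lambda B')^{-1}\leq(A\nabla_\lambda B)^{-1}$ and $(B'\nabla_\lambda A')^{-1}\leq(B\nabla_\lambda A)^{-1}$. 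Finally, from $0<B\leq\frac{1}{2}I\leq B'$ one gets $B'^{-1}\leq B^{-1}$.

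Each part then closes in the same way. For (i), identity (i) applied to $(A',B')$ gives $A'\nabla_\lambda B'-A'!_\lambda B'=\lambda(1-\lambda)(A-B)(B'\nabla_\lambda A')^{-1}(A-B)$; congruence by the self-adjoint operator $A-B$ turns $(B'\nabla_\lambda A')^{-1}\leq(B\nabla_\lambda A)^{-1}$ into an inequality of the corresponding right-hand sides, and multiplying by $\lambda(1-\lambda)\geq0$ produces exactly $A\nabla_\lambda B-A!_\lambda B$ on the right by identity (i) for $(A,B)$. For (ii), identity (ii) applied to $(A',B')$ has right-hand side $\lambda(1-\lambda)(A-B)(A'\nabla_\lambda B')^{-1}(A-B)$ (using $B'-A'=A-B$), and one compares through $(A'\nabla_\lambda B')^{-1}\leq(A\nabla_\lambda B)^{-1}$. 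For (iii), identity (iii) applied to $(A',B')$ has right-hand side $\lambda(1-\lambda)(A-B)B'^{-1}(A-B)$, and one compares through $B'^{-1}\leq B^{-1}$; note that the subtracted terms $-A'$ and $-A$ are already built into the statement, so nothing extra is needed there.

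I do not anticipate a genuine obstacle: once the identities of the preceding theorem are available, the argument is a short chain of order manipulations, and everything rests on the single observation that $A,B\leq\frac{1}{2}I$ makes each relevant quantity — the means $A\nabla_\lambda B$, $B\nabla_\lambda A$, and $B$ itself — dominated by its complement, so that inverting reverses the inequality in the right direction. The only points deserving a little care are the sign bookkeeping in $A'-B'=-(A-B)$ (harmless, since this difference only ever appears inside a symmetric congruence $X\mapsto (A'-B')X(A'-B')$) and verifying that the operators being inverted are strictly positive, which is immediate from $A',B'\geq\frac{1}{2}I$ and the invertibility already in force in the preceding theorem.
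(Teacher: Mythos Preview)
Your proposal is correct and follows essentially the same route as the paper: apply the three identities of the preceding theorem once to $(A,B)$ and once to $(A',B')$, use $A'-B'=-(A-B)$ to rewrite the primed right-hand sides in terms of $A-B$, and then compare via the operator inequalities $(B'\nabla_\lambda A')^{-1}\leq(B\nabla_\lambda A)^{-1}$, $(A'\nabla_\lambda B')^{-1}\leq(A\nabla_\lambda B)^{-1}$, and $B'^{-1}\leq B^{-1}$ together with congruence by $A-B$. The paper carries out (i) in detail and states that (ii) and (iii) are analogous; you have simply written out those analogous steps, so there is no substantive difference.
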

\begin{proof}
(i) Since  $A',B'>0$, substituting $A$ and $B$ with $A'$ and $B'$
in (\ref{th-ex-lem-f}) respectively, we get
{\small\begin{equation}\label{cor-ex-i}
A'\nabla_\lambda B'-A'!_\lambda B'
=\lambda(1-\lambda)(A-B)(B'\nabla_\lambda A')^{-1}(A-B).
\end{equation}
}
Now since $0<A\leq A'$ and $0<B\leq B'$, we have
$$
0<B\nabla_\lambda A=(1-\lambda)B+\lambda A\leq (1-\lambda)B'+\lambda A'=B'\nabla_\lambda A',
$$
and so
$$
\big(B'\nabla_\lambda A'\big)^{-1}\leq\big(B\nabla_\lambda A\big)^{-1}.
$$
Hence
$$
(A-B)\big(B'\nabla_\lambda A'\big)^{-1}(A-B)
\leq
(A-B)\big(B\nabla_\lambda A\big)^{-1}(A-B).
$$
Now considering (\ref{th-ex-lem-f}) and (\ref{cor-ex-i})
we obtain (\ref{operator-ky-fan3}).
The proofs of (ii) and (iii) are similar to (i) and we omit the details.
\end{proof}
\begin{remark}
(i) In the process of proving  (\ref{th-ex-lem-f-s}) and
(\ref{th-ex-lem-f-ex}), we used  (\ref{lem-f-s}) and (\ref{lem-f-ex}).
If in (\ref{p-l-(ii)}), instead of  multiplying $T^{\frac{1}{2}}$ from  left and right, 
 multiplying $T$ only from right or only from left respectively, and then substituting
$T=A^{-\frac{1}{2}}BA^{-\frac{1}{2}}$ and multiplying $A^{\frac{1}{2}}$ from left and right,
we obtain the following chain of identities 
{\small\begin{align}
\nonumber&~(A\sharp B)\Big[(A!_\lambda B)^{-1}-(A\nabla_\lambda B)^{-1}\Big]
(A\sharp B)=A\Big[(A!_\lambda B)^{-1}-(A\nabla_\lambda B)^{-1}\Big]B\\
\label{eq1}&=B\Big[(A!_\lambda B)^{-1}-(A\nabla_\lambda B)^{-1}\Big]A
=\lambda(1-\lambda)(B-A)(A\nabla_\lambda B)^{-1}(B-A).\qquad\qquad
\end{align}
}
Similarly related to (\ref{th-ex-lem-f-ex}), instead of multiplying
(\ref{lem-f}) from left and right by  $(I!_\lambda T)^{-\frac{1}{2}}$, multiplying
(\ref{lem-f}) only form right or only from left by
$(I!_\lambda T)^{-1}$ and continuing in the same manner, 
yield 
{\small\begin{align}
\nonumber&A\Big(A^{-1}\sharp(A!_\lambda B)^{-1}\Big)(A\nabla_\lambda B)
\Big(A^{-1}\sharp(A!_\lambda B)^{-1}\Big)A=A(A!_\lambda B)^{-1}(A\nabla_\lambda B)\qquad\qquad\qquad\\
\label{eq2}&=(A\nabla_\lambda B)(A!_\lambda B)^{-1}A
=\lambda(1-\lambda)(A-B)B^{-1}(A-B)+A.
\end{align}
}
\noindent (ii) In the case that $A,B$  are two commutative operators
such that $0<A,B\leq\frac{1}{2}I$ and $\lambda\in[0,1]$,
the identites (\ref{th-ex-lem-f-s}) and (\ref{th-ex-lem-f-ex})
transform to
{\small\begin{align}
\label{rem-comm-ii}&(A!_\lambda B)^{-1}-(A\nabla_\lambda B)^{-1}
=\lambda(1-\lambda)(B-A)(A\sharp B)^{-2}(A\nabla_\lambda B)^{-1}(B-A)\\
\label{rem-comm-iii}& (A!_\lambda B)^{-1}(A\nabla_\lambda B)-I
=\lambda(1-\lambda)(A-B)A^{-1}B^{-1}(A-B).
\end{align}
}
Now changing $A,B$ by $A',B'$ we obtain
{\small\begin{align}
\label{rem-comm-ii-'}&(A'!_\lambda B')^{-1}-(A'\nabla_\lambda B')^{-1}
=\lambda(1-\lambda)(B-A)(A'\sharp B')^{-2}(A'\nabla_\lambda B')^{-1}(B-A)\\
\label{rem-comm-iii-'}& (A'!_\lambda B')^{-1}(A'\nabla_\lambda B')-I
=\lambda(1-\lambda)(A-B)A'^{-1}B'^{-1}(A-B).
\end{align}
}
Comparing (\ref{rem-comm-ii}) and (\ref{rem-comm-ii-'}), yields
{\small\begin{equation}\label{ii-ii'}
(A'!_\lambda B')^{-1}-(A'\nabla_\lambda B')^{-1}\leq (A!_\lambda B)^{-1}-(A\nabla_\lambda B)^{-1}.
\end{equation}
}
Similarly, comparing (\ref{rem-comm-iii}) and (\ref{rem-comm-iii-'})
consequences
{\small\begin{equation}\label{iii-iii'}
(A'!_\lambda B')^{-1}(A'\nabla_\lambda B')\leq (A!_\lambda B)^{-1}(A\nabla_\lambda B).
\end{equation}
}
\noindent If $x_1,x_2\in(0,\frac{1}{2}]$ and $\lambda\in[0,1]$,
 putting $A=x_1I$ and $B=x_2I$ in
(\ref{operator-ky-fan3}), (\ref{ii-ii'}) and (\ref{iii-iii'}),
we get (\ref{ky-fan-3}), (\ref{ky-fan-4}) and (\ref{ky-fan-5}) when $n=2$.
These show that (\ref{operator-ky-fan3}), (\ref{operator-ky-fan4}) and (\ref{operator-ky-fan5})
are operator extensions of (\ref{ky-fan-3}), (\ref{ky-fan-4}) and (\ref{ky-fan-5}).

\end{remark}
\bigskip

 \end{document}